\theoremstyle{plain}
\newtheorem{theorem}[equation]{Theorem}
\newtheorem{corollary}[equation]{Corollary}
\theoremstyle{definition}
\newtheorem{definition}[equation]{Definition}
\theoremstyle{remark}
\newcommand{\eps}{\varepsilon}
\newcommand{\re}{\mathbb{R}}
\newcommand{\rn}{\mathbb{R}^n}
\newcommand{\ree}{\mathbb{R}^{n+1}}
\newcommand{\dd}{\mathbb{D}}
\newcommand{\vp}{\varphi}
\newcommand{\xb}{{\bf x}}
\newcommand{\yb}{{\bf y}}
\newcommand{\Xb}{{\bf X}}
\newcommand{\Yb}{{\bf Y}}
\newcommand{\te}{\mathcal{T}_{\varepsilon}}
\begin{document}
\allowdisplaybreaks

\title[Nonhomogeneous Parabolic SIO]{Parabolic Singular Integrals with Nonhomogeneous Kernels}

\author{S. Bortz}
\address{Department of Mathematics
\\
University of Alabama
\\ 
Tuscaloosa, AL, 35487, USA}
\email{sbortz@ua.edu}
\author{J. Hoffman}
\address{Department of Mathematics
\\
University of Missouri
\\
Columbia, MO 65211, USA}
\email{jlh82b@mail.missouri.edu}
\author{S. Hofmann}
\address{
Department of Mathematics
\\
University of Missouri
\\
Columbia, MO 65211, USA} 
\email{hofmanns@missouri.edu}
\author{J.L.~Luna-Garcia}
\address{Department of Mathematics
\\
University of Missouri
\\
Columbia, MO 65211, USA}
\email{jlwwc@mail.missouri.edu}
\author{K. Nystr\"om}
\address{Department of Mathematics, Uppsala University, S-751 06 Uppsala, Sweden}
\email{kaj.nystrom@math.uu.se}

\thanks{S.B. was supported by the Simons foundation grant ``Travel support for Mathematicians” (grant number 959861). The authors J.H., S.H., and J.L.~L-G.  were 
partially supported by NSF grant  DMS-2000048.  
S.H. is currently supported by NSF grant DMS-2349846.
K.N. was partially supported by grant  2022-03106 from the Swedish research council (VR)}

\subjclass[2020]{28A75, 42B20}

\date{\today}



\maketitle

\begin{abstract}
We establish $L^2$ boundedness of all ``nice" parabolic singular integrals on ``Good Parabolic Graphs",
aka {\em regular} Lip(1,1/2) graphs.  The novelty here is that we include non-homogeneous kernels,
which are relevant to the theory of parabolic uniform rectifiability.  Previously, the third 
named author had treated the case of homogeneous kernels.  
The present proof combines the methods of that work (which in turn was
based on methods described in Christ's CBMS lecture notes), with the techniques of Coifman-David-Meyer.
\end{abstract}

\section{Introduction}
In this note we establish $L^2$ boundedness of ``nice" parabolic singular integrals on ``Good Parabolic Graphs",
aka {\em regular} Lip(1,1/2) graphs (see Theorem \ref{tsio}). In fact, combining Theorem \ref{tsio} and the results of \cite{BHHLN1}, we deduce 
that ``nice" parabolic singular integrals are $L^2$ bounded on parabolic uniformly rectifiable sets (Corollary \ref{csio})\footnote{In fact, our Corollary \ref{csio}
was stated previously as \cite[Corollary 4.9]{BHHLN1}, but the latter result 
 relies crucially on our results here, which are 
 quoted without proof in \cite{BHHLN1}.}. 
Thus, our work is the parabolic analogue of one direction of the characterization of classical uniform rectifiability, in terms of $L^2$ boundedness of
(elliptic) singular integrals, established in
\cite{DS1}.  Our approach combines 
the methods of \cite{H} with those of \cite{CDM}. 
At the time this note was initially written, our intention was eventually to characterize parabolic uniform rectifiability by the boundedness of parabolic SIOs, but that is a considerably more complicated task than we had first expected, for reasons we describe below.

The study of singular integral operators on rough sets was initiated by Calder\'on's work on the Cauchy integral \cite{C}, where he proved $L^2$ 
boundedness of the Cauchy integral operator on Lipschitz graphs with small constant. It was, however, the work of Coifman, McIntosh and Meyer \cite{CMM} that revolutionized the study of singular integrals on rough sets, 
and the developments that followed \cite{CDM, DJ-T1, DS1, DS2} 
would elucidate the deep connections between (quantitative) rectifiability and 
singular integrals, and more generally,   
between partial differential equations, operator theory, geometric measure theory and harmonic analysis.  A thorough literature review of these topics is not possible in this short introduction, but we refer the interested reader to \cite{BHMN} for a 
more extensive discussion of related history, pertaining to both PDE and singular integral theory.

As stated above, 
 the present result, on 
parabolic singular integrals with non-homogeneous kernels, is a first step towards
the goal of replicating, in a parabolic context, 
some of the elliptic theory of David and Semmes \cite{DS1, DS2}. 
In those works, David and Semmes show that $L^2$ 
boundedness of all ``nice" (elliptic) singular integral operators on an Ahlfors regular set implies that the set is uniformly (i.e., quantitatively) rectifiable (UR). The parabolic theory, however, turns out not to be as nice as
its elliptic analogue. 
Indeed, in the parabolic setting, it is known that the parabolic Lipschitz condition (``Lip(1,1/2)") is not sufficient for many desirable properties, 
particularly in partial differential equations and singular integral theory 
(see e.g. \cite{BHMN} for a historical overview). Here one must enforce additional regularity on the graph in the form of control on the half-order time derivative and this non-local condition causes some of the characterizations in \cite{DS2} to fail\footnote{See e.g. \cite[Observation 4.10]{BHHLN1}.}.

It is also true that, as stated, Corollary \ref{csio} can have no (unrestricted)
converse, that is, the boundeness of ``nice" parabolic singular integrals on a 
parabolic Ahlfors regular set does not (without further conditions)
imply that the set is parabolic uniformly rectifiable. Indeed, it was pointed out to the authors by B. Jaye that there are parabolic ADR sets which have parabolic SIO bounds, but are not parabolic UR. Indeed, in this setting, the time and space variables are decoupled in the sense that we require oddness only in $x$. This allows one to build a parabolic Ahlfors regular set which is a union of purely spatial planes, 
and has the property that $T1 = 0$ for all (convolution)
kernels that are odd in the space variable. 

Recently, B. Jaye and the second named author have proved a suitable (partial) converse to Corollary \ref{csio}. They show that the example described above is
in some sense the only enemy. In short, they introduce coefficients that measure how close a set is to being a union of spatial planes in transportation distance 
(that is, a modification of the alpha numbers of Tolsa \cite{T}), 
and show that if nice parabolic SIOs are $L^2$-bounded, and in addition,
 if for every $\epsilon > 0$, the set where these alpha-coefficients is {\it less} than 
 $\epsilon$ is a Carleson set, then the set is parabolic uniformly rectifiable \cite{HoffJa}.

\section{Notation, definitions, and statement of results}

\noindent{\bf Notation:}
Our ambient space is $(n+1)$-dimensional space-time:
\[\ree = \big\{ \Xb:=(X,t) = (x_0,x,t) \in \re\times \re^{n-1}\times\re\big\}\,,\]
and we shall also at times work with $n$-dimensional space-time
\[\rn = \big\{ \xb:= (x,t)  \in \re^{n-1}\times\re\big\}\,.\]
We write $(\xi,\tau) \in \re^{n-1}\times \re$ to denote points on the Fourier transform side of
space-time.
We also write $\rn_{sp}$ to denote purely spatial $n$-dimensional Euclidean space.
Note in particular that we shall often distinguish one spatial variable in $\rn_{sp}$, and write, e.g., 
$X=(x_0,x) \in \re\times \re^{n-1}$, and that we use lower case letters
$x,y,z$ to denote spatial points in $\re^{n-1}$, and 
capital letters $X=(x_0,x),Y=(y_0,y),Z=(z_0,z)$ to denote points in  $\rn_{sp}=\re\times \re^{n-1}$.
In accordance with the notation introduced above,
we let $\|\Xb\|=\|(X,t)\|= \|(x_0,x,t)\|$ and $\|\xb\|=\|(x,t)\|$ denote the parabolic length 
of the vectors $\Xb=(X,t)\in\ree$ and $\xb=(x,t)\in\rn$, respectively, that is, 
\[\|\Xb\|=\|(X,t)\| = |X| + |t|^{1/2}, \quad \|\xb\|=\|(x,t)\ = |x| + |t|^{1/2}.\]

We let $d=n+1$ denote the parabolic homogeneous dimension of space-time $\rn$.

We define a fractional integral operator $I_p$ of parabolic order 1 on $\rn$
by means of the Fourier transform:
\[\widehat{I_{p} f}(\xi,\tau):= \|(\xi,\tau)\|^{-1} \widehat{f} (\xi,\tau)\,.
\]

\begin{definition}[\em Parabolic Calder\'on-Zygmund kernels]\label{defczn} For an integer $N\geq 1$,
we shall say that a kernel $K=K(X,t)$ satisfies a ($d$-dimensional)
parabolic C-Z(N) condition, and we write $K\in C\text{-}Z(N)$,
if it satisfies the following properties:
\begin{enumerate}
\item[(i)] Smoothness with C-Z estimates:  $K \in C^N(\ree\setminus \{0\})$, with the estimate
\[
|\nabla_X^j\,\partial_t^kK(X,t)| \,\leq\, C_{j,k}\, \|(X,t)\|^{-d-j-2k} \,, \quad \forall \, 0\leq j+k\leq N\,.
\]
\item[(ii)] Oddness in {\em spatial} variables:  $K(X,t) = -K(-X,t)$, for each $(X,t) \in \rn_{sp}\times\re$.
\end{enumerate}
We shall simply say that $K$ is a C-Z kernel (and we write $K\in C\text{-}Z$), if
$K\in C\text{-}Z(N)$ for some positive integer $N$.
We shall also consider analogous kernels $H(x,t)$ defined on $\rn$, {\em not} necessarily odd in the space
variables, but still
satisfying the $n$-dimensional version of
property (i) above, i.e., 
\begin{equation}\label{eqczn}
|\nabla_x^j\,\partial_t^k H(x,t)| \,\leq\, C_{j,k}\, \|(x,t)\|^{-d-j-2k} \,, \quad \forall \, 0\leq j+k\leq N\,.
\end{equation}
In this case we shall say that $H$ satisfies the $C\text{-}Z(N)$(i) condition, or
$H\in C\text{-}Z(N)$(i).
\end{definition}
Note that we do not assume homogeneity of $K$ and $H$ in the preceding definition.

\begin{definition}[\em Regular Lip(1,1/2) functions and Good Parabolic Graphs]\label{defreg} We say that a function
$A:\rn \to \re$ is a Lip(1,1/2) function if there exists a constant $L \ge 0$ such that
\[|A(\xb) - A(\yb)| = |A(x,t) - A(y,s)| \le L(|x - y| +  |t - s|^{1/2}), \quad \forall \xb, \yb \in \mathbb{R}^n.\]
We say a function
$A:\rn \to \re$ is a regular Lip(1,1/2) function if it is Lip(1,1/2), and if in addition $\dd_n A\in BMO$,
where $\dd_n:= I_{p} \circ \partial_t$ is the half-order time derivative. Following \cite{H}, we endow the
regular Lip(1,1/2) functions with the norm
\[\|A\|_{comm} := \|\nabla_x A\|_{L^\infty(\rn)} + \|\dd_n A\|_{BMO(\rn)}\,.\]
Of course, BMO is parabolic BMO, defined with respect to parabolic cubes (or balls).

We say that a graph $\Gamma := \left\{\big(A(\xb),\xb\big)\right\}$ is a Good Parabolic Graph, and we write
 $\Gamma \in GPG$, if $A$ is a regular Lip(1,1/2) function.
\end{definition}

``Surface measure\footnote{See Appendix B in \cite{BHHLN2}.}"  on $\Gamma$ is defined to be $d\sigma(x,t):= \sqrt{1+|\nabla_x A(x,t)|^2} dx dt$.

Given a Good Parabolic Graph $\Gamma$, and a C-Z kernel $K$, we define
associated truncated singular integral operators as follows, for each $\eps>0$:
\[T^\Gamma_\eps f(\Xb):=\int_{\Gamma\cap \|\Xb-\Yb\|>\eps} K\big(\Xb-\Yb\big)\, f(\Yb)\, d\sigma(\Yb)\,,
\quad \Xb\in \Gamma\,,
\]
and using graph co-ordinates $\Gamma:=
\left\{\big(A(\xb),\xb\big)\right\}$, we write the corresponding Euclidean version
\begin{equation}\label{eqTdef}
T_\eps f(\xb):=\int_{\|\xb-\yb\|>\eps} K\big(A(\xb)-A(\yb),\xb-\yb\big)\, f(\yb)\, d\yb\,,\quad \xb\in \rn\,.
\end{equation}

We have the following.
\begin{theorem}\label{tsio} Let $K\in C\text{-}Z(2N)$, and suppose that $\Gamma
:= \left\{\big(A(\xb),\xb\big)\right\} \in GPG$.
If $N$ is large enough, then for some universal constant $N_1$,
we have the uniform $L^2$ bound
\begin{equation*}
\sup_{\eps>0} \|T^\Gamma_\eps f\|_{L^2(\Gamma)} \, \leq\, C(1+\|A\|_{comm})^{N_1}\| f\|_{L^2(\Gamma)}\,, 
\end{equation*}
where $C$ depends on $K$ and $n$. 
\end{theorem}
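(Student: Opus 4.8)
The plan is to flatten the estimate onto $\rn$, reduce it to a parabolic $T1$‑type criterion, and verify that criterion by means of parabolic Calder\'on commutator bounds; the genuinely new difficulties, relative to the homogeneous case of \cite{H}, are the summation over scales forced by the inhomogeneity of $K$ and the need to keep all constants polynomial in $\|A\|_{comm}$.

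\emph{Flattening.} In the graph coordinates $\Gamma=\{(A(\xb),\xb)\}$ the quantity $\sup_\eps\|T^\Gamma_\eps f\|_{L^2(\Gamma)}$ is comparable to $\sup_\eps\|T_\eps g\|_{L^2(\rn,d\xb)}$, with $T_\eps$ as in \eqref{eqTdef} and $g=f\,(1+|\nabla_x A|^2)^{1/2}$; since $1\le(1+|\nabla_x A|^2)^{1/2}\le 1+\|\nabla_x A\|_\infty$, it suffices to bound $\sup_\eps\|T_\eps\|_{L^2(\rn)\to L^2(\rn)}$. One first checks that the truncated kernels $K(A(\xb)-A(\yb),\xb-\yb)\,\mathbf 1_{\{\|\xb-\yb\|>\eps\}}$ are, uniformly in $\eps$, parabolic Calder\'on--Zygmund kernels on $\rn$ of homogeneous dimension $d$: the size bound follows from Definition \ref{defczn}(i) and the estimate $|A(\xb)-A(\yb)|\lesssim\|\xb-\yb\|$ coming from $A$ being regular Lip(1,1/2); the H\"older bounds in the spatial variables follow from the chain rule, Definition \ref{defczn}(i) and $\nabla_x A\in L^\infty$, with constants $\lesssim 1+\|A\|_{comm}$. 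Regularity in the time variable is kept only in the weaker sense intrinsic to regular Lip(1,1/2) graphs: a time derivative falling on the kernel produces a factor $\partial_t A$, which is \emph{not} controlled pointwise --- only $\dd_n A$ is controlled, and only in $BMO$.

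\emph{Reduction to $T1$.} Since $(\rn,\|\cdot\|,d\xb)$ is a parabolic space of homogeneous type, a parabolic version of the David--Journ\'e $T1$ theorem reduces the desired bound to: (a) the weak boundedness property for $\{T_\eps\}$; (b) $T\mathbf 1\in BMO(\rn)$; (c) $T^*\mathbf 1\in BMO(\rn)$, each with norm $\lesssim(1+\|A\|_{comm})^{N_1}$; the quantitative form of the $T1$ theorem then bounds $\sup_\eps\|T_\eps\|_{L^2\to L^2}$ by a constant times the sum of these three quantities, giving the claim for a universal $N_1$. Part (a) is immediate from the size bound. For (c), the substitution $\xb\mapsto-\xb$ together with the oddness in Definition \ref{defczn}(ii) identifies the transpose kernel, up to a sign and a reflection of the time variable, with one of the same form $\widetilde K(\widetilde A(\xb)-\widetilde A(\yb),\xb-\yb)$, where $\widetilde K\in C\text{-}Z(2N)$ and $\|\widetilde A\|_{comm}=\|A\|_{comm}$; so (c) follows from the proof of (b), and everything reduces to $T\mathbf 1\in BMO$.

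\emph{The commutator reduction.} To prove $T\mathbf 1\in BMO$, I would run the Calder\'on‑commutator expansion in the ``graph parameter'': writing $A_s:=sA$ and letting $T^{(s)}$ have kernel $K(A_s(\xb)-A_s(\yb),\xb-\yb)$, one has $T^{(0)}\mathbf 1=0$ because $K(0,\xb-\yb)$ is odd in the spatial variables, and hence, by the fundamental theorem of calculus in $s$,
\[
T\mathbf 1(\xb)=\int_0^1\!\int (A(\xb)-A(\yb))\,\partial_{x_0}K\big(s(A(\xb)-A(\yb)),\xb-\yb\big)\,d\yb\,ds .
\]
For each fixed $s$ the inner integral is a parabolic first Calder\'on commutator: an operator with kernel $(A(\xb)-A(\yb))\,L_s(\xb,\yb)$, where $L_s(\xb,\yb)$ is a graph evaluation of the \emph{even} kernel $\partial_{x_0}K$, of size $\|\xb-\yb\|^{-d-1}$ and, by Definition \ref{defczn}(i), uniformly smooth in $s$. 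Thus it suffices to show that such an operator applied to $\mathbf 1$ lies in $BMO(\rn)$ uniformly in $s$, with norm $\lesssim(1+\|A\|_{comm})^{N_1}$, and then integrate in $s$. (Equivalently one may Taylor expand $K$ in its first argument to a fixed order, reducing $T$ to finitely many parabolic Calder\'on commutators plus a remainder governed by a high‑order derivative of $K$; this is where the hypothesis $K\in C\text{-}Z(2N)$ with $N$ large enters.)

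\emph{The parabolic commutator estimate --- the main obstacle.} This is the heart of the argument and the place where the methods of \cite{H} (patterned on Christ's CBMS lectures) meet the Coifman--David--Meyer techniques. One decomposes $\partial_{x_0}K$ into parabolic Littlewood--Paley pieces (a Whitney decomposition in the parabolic ``radial'' variable $\|\cdot\|$), each essentially homogeneous on a parabolic annulus and uniformly controlled; realizes the commutator as a sum over scales of model operators; and sums it by Cotlar--Stein / Schur almost‑orthogonality, exploiting the cancellation supplied by the factor $A(\xb)-A(\yb)$ (which tames one order of singularity) and by the oddness of $K$. The square‑function estimates driving this come from parabolic Littlewood--Paley theory, the crucial input being that $\dd_n A\in BMO$ furnishes a parabolic Carleson measure; the smoothness $K\in C\text{-}Z(2N)$ with $N$ large guarantees enough decay for the almost‑orthogonality sums and the Littlewood--Paley coefficients. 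The subtle point --- and the reason the parabolic case is genuinely harder than its elliptic ancestor --- is that the even factor $\partial_{x_0}K$ destroys the spatial cancellation one would want; integrating in the spatial variables converts this defect precisely into the structure of the half‑order time derivative $\dd_n A=I_p\circ\partial_t A$, which is then controlled in $BMO$ rather than pointwise. In the homogeneous setting of \cite{H} the scale summation is unnecessary --- the expansion of the homogeneous kernel's density on the parabolic sphere does that work --- so the new ingredient here is exactly the Littlewood--Paley / almost‑orthogonality summation over scales imported from Coifman--David--Meyer, which also, together with the uniform‑in‑$\eps$ structure noted at the start, yields the uniformity of the final bound over all $\eps>0$.
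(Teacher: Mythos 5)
Your high-level scaffolding (flatten, reduce to a $T1$-type estimate, reduce to commutators) is in the spirit of the paper, but the pivotal reduction step is circular, and this is precisely the obstruction the paper's Coifman--David--Meyer device exists to overcome. Your fundamental-theorem-of-calculus step in $s$ gives
\[
T\mathbf 1(\xb)=\int_0^1\!\int (A(\xb)-A(\yb))\,\partial_{x_0}K\big(s(A(\xb)-A(\yb)),\xb-\yb\big)\,d\yb\,ds ,
\]
and you call the inner integral ``a parabolic first Calder\'on commutator.'' It is not: the factor $L_s(\xb,\yb)=\partial_{x_0}K\big(s(A(\xb)-A(\yb)),\xb-\yb\big)$ still depends on $A$, so the kernel is $(A(\xb)-A(\yb))\,L_s(\xb,\yb)$ with $L_s$ another graph-evaluated kernel of exactly the kind you were trying to bound, not a convolution kernel $H(\xb-\yb)$. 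Your parenthetical fallback --- Taylor expanding $K$ in $x_0$ to finite order --- has the same defect: the remainder is $(A(\xb)-A(\yb))^N$ times the graph evaluation of $\partial_{x_0}^N K$, and since $|A(\xb)-A(\yb)|/\|\xb-\yb\|$ is of order $\|\nabla_x A\|_\infty$ (not small for a general Lip(1,1/2) function), that remainder has the same singularity as the original kernel; the expansion neither terminates nor converges. The paper instead represents $K(x_0,\xb)=\int_\re e^{2\pi i\zeta x_0/\|\xb\|}H_\zeta(\xb)\,d\zeta$, which produces genuinely $A$-independent convolution kernels $H_\zeta$ satisfying the CZ estimates with a factor $(1+|\zeta|)^{-N}$ of decay, so the graph dependence is concentrated in a bounded entire function $E=\sin$ or $\cos$ of $(A(\xb)-A(\yb))/\|\xb-\yb\|$, and the $\zeta$-integral converges provided $N>N_0+1$. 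This is where $K\in C\text{-}Z(2N)$ with $N$ large is really used --- not in a Taylor-remainder role.

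Your step 4 is also only impressionistic, and does not match what the paper actually does to control the resulting Calder\'on-type operators. There, one linearizes $\cos$ about $P_\delta\nabla_x A$, expands the resulting $\sin$ factor into parabolic spherical harmonics to produce a convergent sum of genuine first-order commutators $\mathcal C^{k,j}_\eps$ with convolution kernels (with rapidly decaying coefficients by the smoothness of $\sin$), and bounds the quadratic error by a parabolic Dorronsoro estimate; the first commutator itself (Theorem \ref{commutator}) is then handled by parabolic polar coordinates, a radial integration by parts, and the observation that the two resulting pieces are a convolution SIO acting on $\nabla_x A$ and an $I_p$-type multiplier acting on $\partial_t A$ (i.e., on $\dd_n A$ up to a bounded multiplier), controlled via the localization lemma. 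Your intuition that the half-order time derivative is generated after an integration by parts is on target for this last step, but the expansion that reaches convolution-kernel commutators is missing from your proposal, and the step you supplied in its place does not close.
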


The case that $K$ is {\em homogeneous} was previously treated in \cite{H2}; some particular homogeneous
kernels arising in the theory of parabolic layer potentials were originally treated by Lewis and Murray \cite{LM}.

Combining Theorem \ref{tsio} with the results of \cite{BHHLN1}, and using the ``big pieces/
good-$\lambda$" method of G. David (see \cite[Proposition III.3.2]{D}), 
we obtain as an immediate corollary the following
(see \cite[Section 4]{BHHLN1} for more details):

\begin{corollary}\label{csio}
The conclusion of Theorem \ref{tsio} continues to hold when the graph $\Gamma$ is replaced by a
parabolic uniformly rectifiable set.
\end{corollary}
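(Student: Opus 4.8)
\textbf{Plan for the proof of Corollary \ref{csio}.}
The strategy is a standard ``big pieces'' good-$\lambda$ argument in the spirit of G.~David \cite[Proposition III.3.2]{D}, specialized to the parabolic setting; essentially all the heavy analytic lifting has already been done in Theorem \ref{tsio}, and what remains is to transfer the estimate from single Good Parabolic Graphs to a general parabolic uniformly rectifiable (PUR) set. The key structural input is that, by the results of \cite{BHHLN}, every PUR set $E\subset\ree$ has \emph{big pieces of Good Parabolic Graphs}: there are constants $M,\theta>0$ (depending only on the PUR constants of $E$) so that for every parabolic surface ball $\Delta = \Delta(\Xb_0,r) = E\cap\{\|\Xb-\Xb_0\|<r\}$ there exists a Good Parabolic Graph $\Gamma$ with $\|A\|_{comm}\le M$ and
\begin{equation}\label{eqbp}
\sigma_E\big(\Delta\cap\Gamma\big)\,\geq\,\theta\,\sigma_E(\Delta)\,,
\end{equation}
where $\sigma_E = \mathcal{H}^{d}|_E$ is the parabolic surface measure on $E$ (ADR of dimension $d=n+1$). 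Theorem \ref{tsio} applied to each such $\Gamma$ gives the uniform bound $\sup_{\eps>0}\|T^\Gamma_\eps\|_{L^2(\Gamma)\to L^2(\Gamma)}\le C(1+M)^{N_1}=:C_0$, with $C_0$ independent of $\Delta$.

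The first step is to record the requisite Calder\'on--Zygmund machinery on the space of homogeneous type $(E,\|\cdot\|,\sigma_E)$: since $K\in C\text{-}Z(2N)$ is an antisymmetric (odd in the spatial variables) standard parabolic CZ kernel, the truncated operators $T^E_\eps$ on $E$ are a priori bounded on $L^2(\sigma_E)$ with bounds depending on $\eps$, they satisfy the standard size and smoothness estimates uniformly in $\eps$, and a maximal-truncation/Cotlar-type inequality holds. Consequently, by the parabolic Calder\'on--Zygmund theory on spaces of homogeneous type, it suffices to prove the \emph{weak-type} $(1,1)$ bound (or, more directly, a good-$\lambda$ inequality) for the maximal operator $T^E_* f := \sup_{\eps>0}|T^E_\eps f|$; from weak $(1,1)$ plus an easy $L^\infty\to BMO$ or restricted-type estimate one interpolates to get the $L^2$ bound asserted in the corollary. (Alternatively one may run the good-$\lambda$ argument directly at the $L^2$ level.) The second step is the core good-$\lambda$ estimate: fix $\lambda>0$, let $\Omega_\lambda = \{T^E_* f > \lambda\}$, perform a Whitney/Calder\'on--Zygmund decomposition of $\Omega_\lambda$ into boundary balls $\{\Delta_j\}$, and on each $\Delta_j$ invoke \eqref{eqbp} to find a graph $\Gamma_j$ covering a fixed proportion $\theta$ of $\Delta_j$. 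On $\Delta_j\cap\Gamma_j$ one compares $T^E_\eps f$ with the graph operator $T^{\Gamma_j}_\eps(f\mathbf{1}_{C\Delta_j})$ (the kernel difference and the localization error are controlled by the CZ estimates and the ADR property of $E$), and then uses the $L^2$ bound $C_0$ for $T^{\Gamma_j}$ from Theorem \ref{tsio} together with a Chebyshev argument. Summing over $j$, one obtains the good-$\lambda$ inequality $\sigma_E(\{T^E_* f>2\lambda,\ Mf\le\gamma\lambda\})\le C\gamma\,\sigma_E(\{T^E_* f>\lambda\})$ for a suitable constant $C=C(C_0,\theta)$ and all small $\gamma$, which upon integration in $\lambda$ yields the desired $L^2(\sigma_E)$ bound for $T^E_*$, and in particular $\sup_{\eps>0}\|T^E_\eps f\|_{L^2(E)}\le C\|f\|_{L^2(E)}$ with $C$ depending only on $K$, $n$, and the PUR constants of $E$.

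The main obstacle is the \emph{comparison step} on $\Delta_j\cap\Gamma_j$: one must show that on this big piece the intrinsic operator $T^E_\eps$ and the graph operator $T^{\Gamma_j}_\eps$ differ, after suitable truncation and localization, by an error that is uniformly bounded on $L^2(\Delta_j\cap\Gamma_j,\sigma)$ (indeed, controlled by the non-tangential maximal function / Hardy--Littlewood maximal function of $f$). This is where the two measures $\sigma_E|_{\Delta_j\cap\Gamma_j}$ and the graph surface measure $d\sigma_{\Gamma_j}$ must be reconciled --- on $\Gamma_j$ they agree, but the point is that $E$ and $\Gamma_j$ coincide only on the big piece, so integrals over $E\setminus\Gamma_j$ must be absorbed using ADR and the $\eps$-truncation. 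In the parabolic setting one has to be careful that all of these geometric comparisons respect the parabolic metric $\|\cdot\|$ and the parabolic scaling (cubes of the form $\text{side}\times\text{side}\times\text{side}^2$), but no genuinely new analytic difficulty arises beyond what is already handled in \cite{D} and \cite{BHHLN}; the parabolic good-$\lambda$ / big-pieces formalism is by now standard. I would follow \cite[Section 4]{BHHLN} for the precise bookkeeping of constants and the passage from the graph bound to the PUR bound.
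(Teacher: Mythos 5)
Your proposal follows exactly the route the paper takes: the paper derives the corollary immediately from Theorem \ref{tsio} by citing the big pieces/good-$\lambda$ method of David \cite[Proposition III.3.2]{D} together with the structural results of \cite{BHHLN} (which supply the big pieces of Good Parabolic Graphs, possibly after one iteration of the ``big pieces'' functor), and gives no further details. Your sketch is a correct and more explicit account of that same standard argument.
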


In order to prove Theorem \ref{tsio}, we will need the following pair of results.

\begin{theorem}\label{convolution} Let $H$ be a kernel defined on $\rn\setminus \{0\}$,
odd in the space variable, and
satisfying \eqref{eqczn} with $N=1$, i.e., 
$H\in C\text{-}Z(1)$(i).  Define truncated convolution singular integral operators
\[
S_\eps f(\xb):=\int_{\|\xb-\yb\|>\eps} H(\xb-\yb)\, f(\yb)\, d\yb\,,\quad \xb\in \rn\,.
\]
Then $S_\eps:L^2(\rn) \to L^2(\rn)$ uniformly in $\eps$, i.e.,
\[\sup_{\eps>0} \|S_\eps f\|_{L^2(\rn)} \leq \, C\|f\|_{L^2(\rn)}
\]
\end{theorem}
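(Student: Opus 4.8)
The plan is to reduce the uniform $L^2$ boundedness of $S_\eps$ to the $L^2$ boundedness of a single principal-value operator plus a harmless maximal-truncation error, and then to prove the latter by the classical Fourier-analytic argument adapted to the parabolic dilation structure. First I would dispose of the truncations: writing $S_\eps = S_\eps^{PV}$ modulo the difference $S_\eps f - S_1 f = \int_{\eps < \|\xb-\yb\|\le 1} H(\xb-\yb) f(\yb)\,d\yb$ for $\eps<1$ (and symmetrically for $\eps \geq 1$), the C-Z(1)(i) size bound $|H(\xb)| \le C\|\xb\|^{-d}$ together with the \emph{oddness} of $H$ in the space variable gives the cancellation $\int_{r_1 < \|\xb\| < r_2} H(\xb)\,d\xb = 0$ over any parabolic annulus (the $t>0$ and $t<0$ half-annuli are each symmetric under $x \mapsto -x$, and $H$ is odd there). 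Hence the truncated pieces can be compared to a smoothly truncated convolution kernel that is a genuine parabolic Calder\'on-Zygmund kernel, and the passage to the limit is controlled uniformly; alternatively, one invokes a Cotlar-type inequality to dominate $\sup_\eps |S_\eps f|$ pointwise by $M(S_1^{PV} f) + C M f$, where $M$ is the parabolic Hardy-Littlewood maximal operator, so that uniform $L^2$ bounds for $S_\eps$ follow from $L^2$-boundedness of the single operator $S^{PV} := \lim_{\eps\to 0} S_\eps$ and the existence of that limit.

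Next I would establish $L^2$-boundedness of $S^{PV}$ via the multiplier. The operator $S^{PV}$ is convolution with a tempered distribution $\mathrm{p.v.}\,H$, and the goal is to show its Fourier transform $m(\xi,\tau) = \widehat{\mathrm{p.v.}\,H}(\xi,\tau)$ is a bounded function. Here the parabolic scaling is essential: the natural dilations are $\delta_\lambda(x,t) = (\lambda x, \lambda^2 t)$, under which $\|\cdot\|$ is homogeneous of degree $1$ and $\rn$ carries homogeneous dimension $d = n+1$. A kernel that were exactly homogeneous of degree $-d$ and odd would have a bounded (in fact homogeneous of degree $0$) multiplier by the classical parabolic analogue of the Calder\'on-Zygmund computation. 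Since $H$ is only \emph{non-homogeneous} but satisfies the C-Z(1) decay and regularity estimates, I would decompose $H$ dyadically with respect to parabolic annuli: write $H = \sum_{j\in\ZZ} H_j$ where $H_j = H \,\eta(\delta_{2^{-j}}(\cdot))$ for a smooth parabolic annular partition of unity $\eta$. Each $H_j$ has mean zero over parabolic annuli by oddness, is supported where $\|\xb\| \sim 2^j$, and satisfies $\|H_j\|_\infty \lesssim 2^{-jd}$, $\|\nabla_x H_j\|_\infty + \|\nabla_x^2 H_j\|$-type bounds and $\|\partial_t H_j\|_\infty \lesssim 2^{-j(d+2)}$. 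A standard stationary-phase / integration-by-parts estimate then yields $|\widehat{H_j}(\xi,\tau)| \lesssim \min\big( (2^j\|(\xi,\tau)\|)^{\,1}, (2^j\|(\xi,\tau)\|)^{-\alpha}\big)$ for some $\alpha>0$ — the first bound from the mean-zero cancellation of $H_j$ against a first-order Taylor expansion of the character $e^{-i(x\cdot\xi + t\tau)}$ on the annulus $\|\xb\|\sim 2^j$, the second from integrating by parts once in $x$ (losing the gradient bound on $H_j$) — and summing the geometric series over $j$ gives $\|m\|_\infty \le C$, whence $\|S^{PV}\|_{L^2\to L^2} = \|m\|_\infty \le C$ by Plancherel.

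I would organize the write-up in that order: (1) reduce to $S^{PV}$ via Cotlar's inequality and the parabolic annular cancellation of $H$ (this also shows the principal value exists); (2) dyadic decomposition $H = \sum_j H_j$ with the cancellation/size/smoothness estimates on each piece recorded; (3) the two-sided bound on $\widehat{H_j}$ by a single integration by parts (using the $x$-smoothness from C-Z(1)(i)) combined with the first-order vanishing moment; (4) geometric summation and Plancherel. The main obstacle is step (3): without homogeneity one cannot simply rescale to a fixed kernel, so one must extract a quantitative decay rate $\alpha$ for $\widehat{H_j}$ at high frequency directly from the single derivative estimate in \eqref{eqczn} with $N=1$, and one must be careful that the mean-zero property being used is the \emph{spatial} oddness (which gives cancellation in the $x$-variable on each fixed-$t$ slice, not a full parabolic vanishing moment) — so the low-frequency estimate should come from pairing $H_j$ against $\big(e^{-ix\cdot\xi} - 1\big)e^{-it\tau}$, exploiting oddness in $x$, rather than against the full first-order Taylor remainder in $(x,t)$. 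Everything else is routine once these estimates are in place, and notably the argument needs only $N=1$, consistent with the statement of the theorem.
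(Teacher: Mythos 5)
The paper does not actually prove this theorem; it is stated as ``well known,'' with two suggested routes (estimate the Fourier transform of the truncated kernel and use Plancherel, or invoke a parabolic $T1$ theorem). Your proposal fleshes out the first of these suggestions, and the overall plan — annular cancellation from spatial oddness, a parabolic dyadic decomposition $H=\sum_j H_j$ with $\|\xb\|\sim 2^j$ on $\operatorname{supp}H_j$, two-sided bounds on $\widehat{H_j}$, geometric summation, Plancherel — is the standard and correct way to carry it out.

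Two points in your sketch need tightening, though neither is fatal. First, you list ``$\|\nabla_x^2 H_j\|$-type bounds'' among the estimates on $H_j$, but the hypothesis is only $C\text{-}Z(1)$(i), which supplies a single derivative; as you yourself note at the end, one derivative suffices, so this should just be dropped. Second, and more substantively, the claimed high-frequency bound $|\widehat{H_j}(\xi,\tau)|\lesssim (2^j\|(\xi,\tau)\|)^{-\alpha}$ does not follow from ``integrating by parts once in $x$'': that step yields $|\widehat{H_j}(\xi,\tau)|\lesssim (2^j|\xi|)^{-1}$ (since $\|\nabla_x H_j\|_{L^1}\lesssim 2^{-j}$), which is strictly weaker than $(2^j\|(\xi,\tau)\|)^{-1}$ whenever the temporal frequency dominates, i.e.\ when $|\xi|\ll |\tau|^{1/2}$. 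There are two easy repairs. (a) Also integrate by parts in $t$: the $k=1$ case of \eqref{eqczn} gives $\|\partial_t H_j\|_{L^1}\lesssim 2^{-2j}$, hence $|\widehat{H_j}|\lesssim (2^{2j}|\tau|)^{-1}$, and combining, $|\widehat{H_j}|\lesssim \min\bigl((2^j|\xi|)^{-1},(2^{2j}|\tau|)^{-1}\bigr)\lesssim (2^j\|(\xi,\tau)\|)^{-1}$ on the high-frequency range. (b) Alternatively, notice that the bound you already have, $|\widehat{H_j}(\xi,\tau)|\lesssim \min\bigl(2^j|\xi|,\,(2^j|\xi|)^{-1}\bigr)$, is by itself geometrically summable in $j$ for each fixed $\xi\ne 0$ (it peaks near $j\approx -\log_2|\xi|$ and decays on both sides), so $\sum_j|\widehat{H_j}(\xi,\tau)|\lesssim 1$ uniformly, with $\tau$ playing no role at all; the case $\xi=0$ is vacuous by oddness. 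Either repair closes the gap. Finally, the write-up would be cleaner if you estimated $\widehat{H\,\mathbf{1}_{\{\|\xb\|>\eps\}}}$ directly (summing over $2^j\gtrsim\eps$): this gives uniformity in $\eps$ immediately from Plancherel and sidesteps both the Cotlar inequality and the need to discuss existence of the principal value.
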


The theorem is well known, and we omit the proof.  One can either estimate the Fourier transform
of the truncated kernel, and use Plancherel's theorem, or else use the parabolic $T1$ theorem.

\begin{theorem}\label{commutator}
Let $H$ be a kernel 
defined on $\rn\setminus \{0\}$, even in the space variable, and
satisfying \eqref{eqczn} with $N=1$, i.e., 
$H\in C\text{-}Z(1)$(i).  Let $A$ be a regular Lip(1,1/2) function.
Define  the truncated first commutator 
\[
\mathcal{C}_\eps f(\xb):=\int_{\|\xb-\yb\|>\eps} \frac{A(\xb)-A(\yb)}{\|\xb-\yb\|}\,
H(\xb-\yb)\, f(\yb)\, d\yb\,,\quad \xb\in \rn\,.
\]
Then $\mathcal{C}_\eps:L^2(\rn) \to L^2(\rn)$ uniformly in $\eps$, i.e.,
\[\sup_{\eps>0} \|\mathcal{C}_\eps f\|_{L^2(\rn)} \leq \, C\|A\|_{comm}\,\|f\|_{L^2(\rn)}
\]
\end{theorem}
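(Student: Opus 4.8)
The plan is to adapt the Coifman--David--Meyer treatment of first commutators to the parabolic setting, separating the part of the increment $A(\xb)-A(\yb)$ that is carried by the spatial gradient $\nabla_x A\in L^\infty$ from the part carried by the half--order time derivative $\dd_n A\in BMO$. It is worth emphasizing at the outset why one cannot simply quote a parabolic $T1$ theorem: the kernel $L(\xb,\yb):=\|\xb-\yb\|^{-1}(A(\xb)-A(\yb))\,H(\xb-\yb)$ does satisfy the standard parabolic Calder\'on--Zygmund estimates, but only with a constant controlled by the Lip(1,1/2) seminorm of $A$, and on the class of \emph{regular} Lip(1,1/2) functions this seminorm is \emph{not} dominated by $\|A\|_{comm}$ (the hypothesis $\dd_n A\in BMO$ yields only parabolic Zygmund, rather than Lip(1,1/2), regularity of $A$ in the time variable). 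So the cancellation encoded in the $BMO$ hypothesis has to be used directly.

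First I would fix a parabolic approximate identity $P_s$ (convolution with an $L^1$-normalized bump, parabolically dilated, and even in the space variables so that $P_s$ reproduces affine-in-space functions), set $Q_s:=s\partial_s P_s$ (an even, cancellative convolution family), and record, with $r:=\|\xb-\yb\|$, the elementary identity
\[
A(\xb)-A(\yb)=\big(P_{r}A(\xb)-P_{r}A(\yb)\big)-\int_0^{r}\big(Q_{u}A(\xb)-Q_{u}A(\yb)\big)\frac{du}{u}.
\]
This splits $\mathcal{C}$ into a ``smooth at scale $r$'' part and a ``high-frequency'' part. For the first part, I would Taylor-expand $P_{r}A(\xb)-P_{r}A(\yb)$ about $\yb=(y,\sigma)$: the leading spatial term is $(P_r\nabla_x A)(\yb)\cdot(x-y)$, paired with the kernel $\|\xb-\yb\|^{-1}(x-y)H(\xb-\yb)$, which is odd in space and Calder\'on--Zygmund, so this term reduces to the convolution type handled by Theorem \ref{convolution} (modulo a variable-scale averaging of $\nabla_x A$, harmless as $\nabla_x A$ is bounded); the leading time term, using that on the Fourier side $\partial_t P_s$ factors as $s^{-1}$ times a cancellative parabolic bump at scale $s$ composed with $\dd_n$ (write $i\tau=\|(\xi,\tau)\|\cdot\big(i\tau/\|(\xi,\tau)\|\big)$), becomes $r^{-1}\widetilde Q_{r}(\dd_n A)(\yb)\,(t-\sigma)$ with $|t-\sigma|\le r^2$ — a paraproduct in $\dd_n A$; the remaining Taylor terms are more regular still. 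The ``high-frequency'' part $\int_0^{r}\big(Q_{u}A(\xb)-Q_{u}A(\yb)\big)\tfrac{du}{u}$ is decomposed in the same way at every scale $u<r$ and carries the genuine two-scale commutator structure.

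In this way $\mathcal{C}$ is exhibited as a superposition over scales of operators of the schematic form $g\mapsto\Theta_s\big(b_s\cdot R_sg\big)$, where $\Theta_s,R_s$ are smooth cancellative parabolic convolution families with uniform $L^2$ bounds (and whose vertical combination $\int_0^\infty\Theta_sR_s\,\tfrac{ds}{s}$ is a convolution singular integral covered by Theorem \ref{convolution}), and where $b_s$ is either $Q_s\nabla_x A$, bounded by $\|\nabla_x A\|_\infty$, or $\widetilde Q_s(\dd_n A)$. The estimate is then closed by the standard paraproduct argument: dualize, apply Cauchy--Schwarz against the square function $\big(\int_0^\infty|R_sg|^2\tfrac{ds}{s}\big)^{1/2}$ (bounded on $L^2$ by Plancherel), and invoke the Carleson measure bound for $|b_s(\xb)|^2\,\tfrac{dx\,dt\,ds}{s}$ — trivial when $b_s=Q_s\nabla_x A$, and equal, up to constants, to $\|\dd_n A\|_{BMO}^2$ when $b_s=\widetilde Q_s(\dd_n A)$, by the parabolic Fefferman--Stein characterization of $BMO$. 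Since $\|\nabla_x A\|_\infty+\|\dd_n A\|_{BMO}=\|A\|_{comm}$, this yields the asserted bound; the sharp truncation $\mathcal{C}_\eps$ is handled uniformly in $\eps$ by replacing it with a smooth truncation modulo an error of the same form, as in \cite{H2}.

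The main obstacle is the time term: both the Fourier-side bookkeeping that replaces the time increments of $P_sA$ and of the $Q_uA$ by clean Littlewood--Paley pieces of $\dd_n A$, and the control of the tails this produces. Since $\dd_n=I_p\circ\partial_t$ is nonlocal its kernel is not compactly supported, and one must also control the region where $\|\xb-\yb\|$ is large compared to the pertinent scale; moreover one has to verify that the cancellation of $H$ survives the integrations by parts and is compatible with the decay of the $\widetilde Q_s$ family, so that the scale integrals converge through genuine almost-orthogonality (a Schur-type estimate over the ratio of scales in the two-scale part) rather than merely being absolutely summable. Carrying all of these error terms, together with those from the sharp truncation, through the scheme uniformly in $\eps$ is the remaining technical work.
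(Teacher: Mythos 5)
Your approach---the Coifman--David--Meyer paraproduct scheme, with the scale decomposition $A=P_rA-\int_0^r Q_uA\,\frac{du}{u}$, Taylor expansion in each scale, and a Carleson-measure/Fefferman--Stein bound on $|\widetilde Q_s(\dd_nA)|^2\,\frac{d\xb\,ds}{s}$---is a legitimate route, and you correctly identify the obstruction to quoting a parabolic $T1$ theorem directly (the Lip(1,1/2) seminorm of $A$ is not controlled by $\|A\|_{comm}$, so the $BMO$ cancellation must be used explicitly). The paper, however, takes a noticeably shorter path. It also reduces to a localized $T1$ estimate $\fint_B|\mathcal{C}_\eps\eta_B|\lesssim M$ with $B$ of radius one, but then invokes a \emph{localization lemma} (\cite[Appendix, Lemma 2]{H2}) replacing $A$ by a compactly supported function satisfying $\|\nabla_xA\|_{L^2}+\|\dd_nA\|_{L^2}\lesssim M$; this converts the $BMO$ hypothesis into an $L^2$ bound at the fixed scale of $B$ and thereby sidesteps the square-function and Carleson-measure machinery entirely. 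The remaining work is a single integration by parts in the radial variable of parabolic polar coordinates, which transfers the divided difference $(A(\xb)-A(\yb))/\|\xb-\yb\|$ onto $\omega'\cdot\nabla_xA$ and onto $\rho\,\omega_n\,\partial_tA$, with a new convolution kernel $\widetilde H$ of the same class. The spatial piece is $S_\eps(\nabla_xA)$ with an odd-in-$x$ kernel, bounded by Theorem \ref{convolution} plus the localized $L^2$ control of $\nabla_xA$; for the time piece the paper shows directly that the convolution kernel $J_\eps(\xb)=\frac{t}{\|\xb\|}\widetilde H(\xb)\,1_{\|\xb\|>\eps}$ satisfies $|\widehat{J_\eps}(\xi,\tau)|\lesssim\|(\xi,\tau)\|^{-1}$, so by Plancherel $\|J_\eps*\partial_tA\|_{L^2}\lesssim\|\dd_nA\|_{L^2}\lesssim M$. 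In particular, the nonlocal tails of $\dd_n$, the almost-orthogonality/Schur estimates, and the uniform handling of the sharp truncation that you flag as ``remaining technical work'' are all absorbed in the paper into the localization lemma, the explicit kernel produced by the radial integration by parts, and one Fourier bound. Both routes are sound; yours is closer to the textbook $Tb$/paraproduct paradigm and more self-contained, while the paper's is shorter precisely because the localization lemma stands in for the Carleson estimate.
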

In the case that $H$ is homogeneous, the result is proved in \cite{H}. We shall
prove the general case here.  We defer the proof to the end of this note,
and take Theorem \ref{commutator} for granted for the moment.

\section{Proofs of the Theorems}

We take Theorem \ref{commutator} for granted until the end of this section.

\begin{proof}[Proof of Theorem \ref{tsio}]
Note that it is equivalent to prove
\begin{equation}\label{eql2}
\sup_{\eps>0} \|T_\eps f\|_{L^2(\rn)} \, \leq\, C(1+\|A\|_{comm})^{N_0}\| f\|_{L^2(\rn)}\,, 
\end{equation}
for some universal constant $N_0$,
where $T_\eps$ is defined as in \eqref{eqTdef}. Indeed, the difference is bounded by a (parabolic) maximal function.
To this end, we first use the technique of \cite{CDM} to reduce matters to 
the case of ``singular integrals of Calder\'on type".  The method is nowadays well-known, but 
for the reader's convenience, we provide the details.
Observe that 
\[ 
K(x_0,\xb) = K\left(\|\xb\|\,\frac{x_0}{\|\xb\|}\,,\xb\right)\,=:
\,K_{\xb}\left(\frac{x_0}{\|\xb\|}\right)\,,
\]
where $K_{\xb}(\kappa):= K\big(\|\xb\|\kappa,\xb\big)$.
Consequently,
\[
K(x_0,\xb) = \int_{\re}e^{2\pi i \frac{x_0}{\|\xb\|} \zeta} \,\,
\widehat{K_{\xb}}\,(\zeta)\, 
d\zeta
\,=:\, \int_{\re}e^{2\pi i \frac{x_0}{\|\xb\|} \zeta} \, H_\zeta(\xb)\, d\zeta\,,
\]
where the Fourier transform acts in one dimension, i.e.,
\[H_\zeta(\xb) := \int_{\re} e^{-2\pi i \zeta \kappa}\, K\big(\|\xb\|\kappa,\xb\big)\, d\kappa\,.
\]
Using the $C\text{-}Z(2N)$ condition, we find that for $0\leq j+k\leq N$, and $0\leq m\leq N$,
\begin{multline*}\|(x,t)\|^{d+j+2k}\,\left | \zeta^m \nabla_x^j\,\partial_t^k \, H_\zeta(\xb)\right| \\[4pt]
=\,C_m \|(x,t)\|^{d+j+2k}\left | \int_{\re} e^{-2\pi i \zeta \kappa}\, 
\partial_\kappa^m \nabla_x^j\,\partial_t^k K\big(\|\xb\|\kappa,\xb\big)\, d\kappa\right| \\[4pt]
\leq C_{m,j,k} \int_{\re}(1+\kappa)^{-d} \, d\kappa \leq C_{m,j,k}\,,
\end{multline*}
i.e.,  
$(1+|\zeta|)^N H_\zeta(\xb)$ satisfies \eqref{eqczn}, 
for each $\zeta \in \re$, with constants that are uniform in $\zeta$.

Next, we split $H_\zeta(\cdot, t)$ into its (spatial) even and odd parts, i.e.,
$H_\zeta = H_\zeta^{even} + H_\zeta^{odd}$, where
\[H_\zeta^{even}(x,t) =\frac12\big(H_\zeta(x,t) + H_\zeta(-x,t)\big)\,,\quad 
H_\zeta^{odd}(x,t) =\frac12\big(H_\zeta(x,t) - H_\zeta(-x,t)\big)\,.
\]
We now claim that $H_\zeta^{even}$ and $H_\zeta^{odd}$ have opposite parity, 
when viewed as functions of $\zeta$, i.e.,
\begin{equation}\label{eqparity}
H_\zeta^{even}(x,t) = - H_{-\zeta}^{even}(x,t)\,,\quad H_\zeta^{odd}(x,t) =  H_{-\zeta}^{odd}(x,t)\,.
\end{equation}
To see this, observe first that
\begin{multline*}
H_{-\zeta}(\xb) := \int_{\re} e^{-2\pi i (-\zeta) \kappa}\, K\big(\|\xb\|\kappa,\xb\big)\, d\kappa\,
=\, \int_{\re} e^{-2\pi i \zeta \kappa}\, K\big(-\|\xb\|\kappa,\xb\big)\, d\kappa \\[4pt]
=\, -\int_{\re} e^{-2\pi i \zeta \kappa}\, K\big(\|\xb\|\kappa,-x,t\big)\, d\kappa
\,,
\end{multline*}
where we have made the change of variable $\kappa \to -\kappa$, and then used oddness of $K$ in the space 
variables.  Taking, for example, the even part in $x$, we have
\[
H_{-\zeta}^{even}(x,t) = -\frac12 \int_{\re} e^{-2\pi i \zeta \kappa}\, \left(K\big(\|\xb\|\kappa,-x,t\big)
+ K\big(\|\xb\|\kappa,x,t\big)\right) \, d\kappa = -H_\zeta^{even}(x,t)\,,
\]
as claimed.  The odd part $H_\zeta^{odd}$ may be treated by a similar argument.  We omit the details.
Thus \eqref{eqparity} holds.

Taking even and odd parts, and using  \eqref{eqparity}, we see that
\begin{equation}\label{eqrep}
K(x_0,\xb) = \int_{\re}\sin\left(2\pi  \frac{x_0}{\|\xb\|} \zeta\right)  \, H^{even}_\zeta(\xb)\, d\zeta
\,+\, \int_{\re}\cos\left(2\pi  \frac{x_0}{\|\xb\|} \zeta\right)  \, H^{odd}_\zeta(\xb)\, d\zeta\,.
\end{equation}
To conclude the proof of Theorem \ref{tsio}, it is therefore enough to prove the following.
\begin{theorem}\label{tsio2} Let $H$ be a singular kernel satisfying \eqref{eqczn}, 
with $N=1$.
Define truncated singular integrals of ``Calder\'on-type" by
\begin{equation}\label{calddef}
\mathcal{T}_\eps f(\xb):=\int_{\|\xb-\yb\|>\eps} E\left(\frac{A(\xb)-A(\yb)}{\|\xb-\yb\|}\right)\,
H(\xb-\yb)\, f(\yb)\, d\yb\,,\quad \xb\in \rn\,,
\end{equation}
where either $H(x,t)$ is odd in $x$, and $E=\cos$, or $H(x,t)$ is even in $x$, and $E=\sin$.

Then there is a universal constant $N_0$ such that 
\[\sup_{\eps>0} \|\mathcal{T}_\eps f\|_{L^2(\rn)} \leq \, C(1+\|A\|_{comm})^{N_0}\|f\|_{L^2(\rn)}\,,
\]
where $C$ depends on $n$ and the constants in \eqref{eqczn}.  
\end{theorem}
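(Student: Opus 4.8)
The plan is to deduce Theorem~\ref{tsio2} from the parabolic $T1$ theorem, applied (as in Christ's CBMS notes) to the uniformly truncated family $\{\mathcal{T}_\eps\}$. The key point is that the conclusion of the $T1$ theorem depends only polynomially -- indeed essentially linearly -- on the Calder\'on--Zygmund constant of the kernel, on the constant in the weak boundedness property, and on the $BMO$ norms of $\mathcal{T}_\eps 1$ and $\mathcal{T}_\eps^\ast 1$; so it suffices to show that each of these is dominated by a fixed power of $1+\|A\|_{comm}$, uniformly in $\eps$. The kernel
\[
\mathcal{K}_\eps(\xb,\yb):=\chi_{\{\|\xb-\yb\|>\eps\}}\;E\!\left(\frac{A(\xb)-A(\yb)}{\|\xb-\yb\|}\right)H(\xb-\yb)
\]
is a standard parabolic Calder\'on--Zygmund kernel with constant $\lesssim 1+\|A\|_{comm}$: the size bound $|\mathcal{K}_\eps(\xb,\yb)|\lesssim\|\xb-\yb\|^{-d}$ is \emph{independent} of $A$ because $|E|\le 1$, while the H\"older estimate in each variable costs only a factor $|E'|\le 1$ times the modulus of continuity of $\xb\mapsto (A(\xb)-A(\yb))/\|\xb-\yb\|$, which is controlled by $1+\|\nabla_x A\|_{L^\infty}+\|A\|_{\mathrm{Lip}(1,1/2)}\lesssim 1+\|A\|_{comm}$ -- here one uses the standard fact that $\nabla_x A\in L^\infty$ and $\dd_n A\in BMO$ force $A\in\mathrm{Lip}(1,1/2)$ with comparable constant, via the mapping properties of the parabolic fractional integral. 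A direct computation shows that $\mathcal{T}_\eps^\ast$ is, up to sign, an operator of exactly the same type with $H(x,t)$ replaced by $H(-x,-t)$ (which has the same spatial parity and obeys the same bounds), so it is enough to treat $\mathcal{T}_\eps$; and the weak boundedness property, with constant $\lesssim 1+\|A\|_{comm}$, follows from the kernel bounds and the cancellation of the test bumps by a routine argument.

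The heart of the matter is to show $\mathcal{T}_\eps 1\in BMO(\rn)$ with $\|\mathcal{T}_\eps 1\|_{BMO}\lesssim(1+\|A\|_{comm})^{N_0}$, uniformly in $\eps$. I would invoke the parabolic Littlewood--Paley / Carleson-measure characterization of $BMO$: for a suitable family $\{P_s\}_{s>0}$ of parabolic Littlewood--Paley projections it suffices to bound
\[
\sup_{Q}\;\frac{1}{|Q|}\int\!\!\!\int_{\widehat{Q}}\big|P_s(\mathcal{T}_\eps 1)(\xb)\big|^{2}\,\frac{d\xb\,ds}{s}.
\]
Since $P_s(\mathcal{T}_\eps 1)=(P_s\mathcal{T}_\eps)(1)$, one studies the kernel of $P_s\mathcal{T}_\eps$; integrating by parts transfers the smoothing of $P_s$ onto the factor $E(\cdot)$, differentiating $A$ and, after the appropriate half-order bookkeeping in the time variable, producing the data $\nabla_x A\in L^\infty$ and $\dd_n A\in BMO$. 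From here the argument follows the scheme of \cite{H2}: along parabolic dyadic annuli the resulting expressions decompose into first-commutator-type pieces -- controlled by Theorem~\ref{commutator} and variable-coefficient versions of it -- which are then reassembled by Cotlar--Stein almost-orthogonality across scales. The non-homogeneity of $H$ intervenes only through the Coifman--David--Meyer device already used in the reduction to Calder\'on type: on each annulus one expands the angular profile of $H$ and separates the $\xb$- and $\yb$-dependences, which costs a bounded number of derivatives and hence only a fixed extra power of $1+\|A\|_{comm}$. Summing the almost-orthogonal contributions -- a geometric series in the separation of scales, with a fixed polynomial in $\|A\|_{comm}$ out front -- yields the Carleson bound.

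I expect the main obstacle to be precisely the almost-orthogonality estimate just described in the \emph{time} direction: the parabolic metric cannot be handled as if it were Euclidean, and there is no time-analogue of the pointwise bound on $\nabla_x A$ -- only the $BMO$ control of $\dd_n A$ is available, so the decay across scales has to be extracted from a Carleson-measure / John--Nirenberg self-improvement rather than from a naive derivative estimate. This is the step where ``regular'' in \emph{regular} $\mathrm{Lip}(1,1/2)$ is indispensable, and it is also what keeps the dependence on $\|A\|_{comm}$ \emph{polynomial}: the crude alternative -- Taylor-expanding $E=\cos$ or $\sin$ and bounding the $m$-th higher-order commutator by a constant times the $m$-th power of $\|A\|_{comm}$ -- converges only for small $\|A\|_{comm}$ and is exponential otherwise. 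Granting Theorem~\ref{commutator} (whose proof is deferred), what remains is bookkeeping; and combining Theorem~\ref{tsio2} with the representation \eqref{eqrep} and the uniform $C\text{-}Z$ bounds on $(1+|\zeta|)^N H_\zeta$ then yields Theorem~\ref{tsio}.
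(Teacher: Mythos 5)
Your framework is right (a $T1$ approach, reduced by translation/dilation invariance to a uniform localized estimate), and your reduction of the transpose to an operator of the same type is correct. But the core of the argument is not present, and the tools you name to fill the gap do not correspond to anything that would close it.

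The key step in the paper's proof is a \emph{second-order Taylor expansion} of $\cos$, not a Littlewood--Paley/Cotlar--Stein scheme. One writes, with $P_\delta$ a parabolic approximate identity at scale $\delta$,
\[
\cos\!\left(\tfrac{A(\xb)-A(\yb)}{\|\xb-\yb\|}\right)
= \cos\!\left(\tfrac{(x-y)\cdot P_\delta\nabla_x A(\xb)}{\|\xb-\yb\|}\right)
+ \tfrac{A(\xb)-A(\yb)-(x-y)\cdot P_\delta\nabla_x A(\xb)}{\|\xb-\yb\|}\,\sin(\cdots)
+ O\!\left(\tfrac{|A(\xb)-A(\yb)-(x-y)\cdot P_\delta\nabla_x A(\xb)|^2}{\|\xb-\yb\|^2}\right).
\]
The zeroth-order term contributes zero by the parity of $H$; the linear term, after removing the piece that again vanishes by parity, is a \emph{first commutator} multiplied by $\sin\big((x-y)\cdot P_\delta\nabla_x A(\xb)/\|\xb-\yb\|\big)$; and the quadratic error is controlled by a parabolic Dorronsoro-type square function
\[
\frac{1}{|B|}\int_B\int_0^1 \gamma_A(\xb,\delta)^2\,\frac{d\delta}{\delta}\,d\xb\lesssim \|A\|_{comm}^2\,,
\]
quoted from \cite{H2}. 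This second-order expansion with a Carleson-measure control of the quadratic remainder is exactly what you identify as indispensable but do not supply: it is the mechanism that both avoids the exponential blow-up of naive Taylor expansion \emph{and} exploits $\dd_n A\in BMO$ in the time direction. Your proposal acknowledges the problem (``the crude alternative \ldots is exponential otherwise'') and gestures at ``Carleson-measure / John--Nirenberg self-improvement'' as the fix, but does not say what estimate to prove or how.

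Second, the reduction of the linear term to Theorem~\ref{commutator} requires separating the $\xb$- and $\yb$-dependences in the factor $\sin\!\big((x-y)\cdot P_\delta\nabla_x A(\xb)/\|\xb-\yb\|\big)$. This is done by expanding this function (as a degree-zero function of $\yb-\xb$) in spherical harmonics, pulling out the coefficients $a_{k,j}(P_\delta\nabla_x A(\xb))$, which have rapid decay $|a_{k,j}|\lesssim_m M^m k^{-m}$; each remaining piece is a genuine first commutator with kernel $Y_{k,j}\,H$, with the right parity, and one applies Theorem~\ref{commutator} plus the dimension count $h_k\lesssim k^{n-2}$ and the sup bound on $Y_{k,j}$ to sum. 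Your sketch replaces this with the vague phrase ``Theorem~\ref{commutator} and variable-coefficient versions of it,'' and misplaces the spherical-harmonic expansion, saying it is applied to ``the angular profile of $H$'' as a remnant of the Coifman--David--Meyer device -- but the CDM Fourier-variable step has already been spent in the reduction to Theorem~\ref{tsio2}, and the spherical harmonics here are expanding the $\sin$ factor, not $H$. Finally, there is no Cotlar--Stein almost-orthogonality across scales anywhere in the actual argument; the $\delta$-integral is handled directly by the commutator bound and the Dorronsoro estimate. As written, your outline names a correct framework but does not contain the decomposition that makes the proof work.
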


Let us take Theorem \ref{tsio2} for granted momentarily. Setting
$x_0 = A(\xb)-A(\yb)$ in \eqref{eqrep},
we see that 
the truncated singular integral $T_\eps$ defined in \eqref{eqTdef} can be represented as a sum of two terms
of the form
\[\int_{\re} \mathcal{T}_\eps^\zeta f \, d\zeta\,,
\]
where $\mathcal{T}_\eps^\zeta$ is defined as in \eqref{calddef}, with $A$ replaced by
$\zeta A$, and either with $E=\cos$ and $H =H^{odd}_\zeta$, or with $E=\sin$ and $H =H^{even}_\zeta$.
Using our previous observation that
$(1+|\zeta|)^N H_\zeta$ satisfies the bounds in \eqref{eqczn}, uniformly in $\zeta$, 
and invoking Theorem \ref{tsio2}, we find that
\begin{multline*}
\sup_{\eps>0} \|T_\eps f\|_{L^2(\rn)} \, \lesssim \int_{\re}(1+|\zeta|)^{-N}(1+|\zeta|\,\|A\|_{comm})^{N_0} \, 
d\zeta\,  \| f\|_{L^2(\rn)} \\[4pt]
\lesssim (1+\|A\|_{comm})^{N_0} \,\| f\|_{L^2(\rn)}\,, 
\end{multline*}
provided that $N >N_0+1$.  Thus \eqref{eql2} holds, and the conclusion of Theorem \ref{tsio} follows.

To complete the proof of Theorem \ref{tsio}, it therefore remains to prove
Theorem \ref{tsio2}.

\begin{proof}[Proof of Theorem \ref{tsio2}]
The theorem was already proved in \cite{H2}, in the case that $H$ is homogeneous.  
The present proof is a modified version of the argument in \cite{H2}, which in turn follows
from the methods in \cite{Ch}.  For specificity,
we treat the case that $E=\cos$, and $H(x,t)$ is odd in $x$, for each fixed $t$; the proof in the other case is
essentially the same.  To simplify notation, we set
\[M:=\|A\|_{comm}\,.\] 
It is enough to verify the localized $T1$ estimate
\begin{equation}\label{eqT1}
\fint_B |\te \eta_B| \leq C(1+M)^{N_0}
\end{equation}
(and the same for the transpose of $\te$, but the latter is of the same form as
$\te$), uniformly in $\eps>0$ and in every parabolic ball 
\[ B=B_r(\xb_B):=\{\xb \in \rn:\, \|\xb-\xb_B\|<r\}\,,\]
where $\eta_B \in C_0^\infty(5B)$, with $\eta_B\equiv 1$ on $4B$, and $0\leq \eta_B\leq 1$.

By scale-invariance\footnote{Even though our 
individual kernels
are not homogeneous, they do form a scale-invariant class, with scale-invariant control of the
various relevant constants.}, we may suppose that the ball $B$  in \eqref{eqT1}
has radius $r=1$.

Following \cite{Ch} , we choose 
$\varphi \in C_0^\infty (1/4,1)$ such that 
\begin{equation}\label{phidef}
\int\limits_0^\infty \varphi (\rho )\,\frac{d\rho }\rho =1,
\end{equation}
We may then replace $\te \eta_B(\xb)$ by 
\begin{equation}\label{eqteother}
 \int_\eps^1\!\int_{\rn} \vp\left(\frac{\|\xb-\yb\|}{\delta} \right)
\cos\left(\frac{A(\xb)-A(\yb)}{\|\xb-\yb\|}\right)\,
H(\xb-\yb)\,  d\yb\,\frac{d\delta}{\delta}\,,
\end{equation}
since for $\xb \in B$,
the error is bounded by a uniform constant depending only on $n$ and $H$.
For $\delta>0$, we define a nice parabolic approximate identity
$P_\delta f:= \Phi_\delta * f$, where
$\Phi \in C_0^\infty(B_1(0))$, $0\leq \Phi$, $\int\Phi = 1$, and 
\[\Phi_\delta(x,t):= \delta^{-d} \Phi\left(\delta^{-1} x,\delta^{-2} t\right)\,.
\]
We then write 
\begin{multline*}
\cos \left(\frac{ A(\xb)-
A(\yb)}{\parallel \xb-\yb\parallel}\right)\,
= \, \cos \left( \frac{(x-y)\cdot P_\delta \nabla
_x A(\xb)}{\parallel \xb-\yb\parallel } \right)  \\[4pt] +\,\,
\frac{ A(\xb)- A(\yb)-(x-y)\cdot
P_\delta \nabla _x A(\xb)}{\parallel \xb-\yb\parallel }\,\sin \left( \frac{
(x-y)\cdot P_\delta \nabla _x A(\xb)}{\parallel
\xb-\yb\parallel }\right) \\[4pt]
+\,\, O\left( \frac{\mid  A(\xb)- A(\yb)-(x-y)\cdot P_\delta \nabla _x A(\xb)\mid ^2}{\parallel \xb-\yb\parallel ^2}
\right) \\[4pt]=: \,I(\xb,\yb) + I I(\xb,\yb) + III(\xb,\yb)
\end{multline*}
Since $H(x,t)$ is odd in $x$, term $I(\xb,\yb)$ contributes zero to the integral in \eqref{eqteother}, 
as does the part of term $II(\xb,\yb)$ involving $(x-y)\cdot P_\delta \nabla _x A(\xb)$ in the numerator 
of the first factor; thus, the contribution of term $II(\xb,\yb)$ equals

\begin{multline}\label{iipart} 
\int_\eps^1\!\int_{\rn} \vp\left(\frac{\|\xb-\yb\|}{\delta} \right)
\frac{A(\xb)-A(\yb)}{\|\xb-\yb\|}
\sin \left( \frac{
(x-y)\cdot P_\delta \nabla _x A(\xb)}{\parallel
\xb-\yb\parallel }\right)
H(\xb-\yb)\,  d\yb\,\frac{d\delta}{\delta}\\[4pt]
= \sum_{j,k} a_{k,j}\big(P_\delta \nabla _x A(\xb)\big)
\int_\eps^1\!\int_{\rn} \vp\left(\frac{\|\xb-\yb\|}{\delta} \right)
\frac{A(\xb)-A(\yb)}{\|\xb-\yb\|}
\widetilde{H}_{k,j}(\xb-\yb)\,  d\yb\,\frac{d\delta}{\delta}\,,
\end{multline}
where $\widetilde{H}_{k,j}= Y_{k,j} H$, and with $\vec{b} = P_\delta \nabla _x A(\xb)$,
\[\sum_{j,k} a_{k,j}(\vec{b}) \,Y_{k,j} \left(\frac{\yb}{\|\yb\|}\right) = \sin \left( \frac{
y\cdot \vec{b}}{\parallel
\yb\parallel }\right)=:f(\vec{b},\yb)
\]
is the orthonormal spherical harmonic expansion (with $k$ being the degree of $Y_{k,j}$) of the
function $f(\vec{b},\yb):= \sin(\|\yb\|^{-1} y\cdot\vec{b})$, which is homogeneous of degree zero
(with respect to parabolic dilations), and thus is determined by its values on the sphere.
Note that by oddness of the Sine function, the $Y_{k,j}$ may be
chosen to be odd in the spatial variable (since the even and odd parts of a spherical harmonic
are also spherical harmonics), 
and therefore, since $H$ has the same property, it follows that
$\widetilde{H}_{k,j}(x,t)= Y_{k,j}(x,t) H(x,t)$ is even in $x$.
Furthermore, for every non-negative integer $m$,  
\[ \sup_{|\vec{b}|\leq M }\, \sup_{\yb \in S^{n-1}} |\nabla_{y,s}^m f(\vec{b},\yb)| 
\leq C_m M^m
\]
so by a well known property of the spherical harmonics
(see, e.g., \cite[pp. 70-71]{S}) we have
\begin{equation}
\parallel a_{k,j}\parallel _{L^\infty (\mid \vec{b}\mid \leq M)} 
\,\leq \,C_m \,M ^{m}\, k^{-m} 
\end{equation}
with $m$ arbitrarily large.  Let us recall also that by standard facts for spherical harmonics
(see, e.g., \cite{CZ}), the dimension $h_k$ of the space of spherical harmonics of degree $k$, satisfies
\begin{equation}\label{hk}
h_k \leq \, C_n\, k^{n-2}\,,
\end{equation}
and also, the normalized spherical harmonics $Y_{k,j}$ satisfy
\begin{equation}\label{Yest}
\sup_{\yb \in S^{n-1}}  |Y_{k,j}(\yb)| \,+\,
\sup_{\yb \in S^{n-1}} k^{-1} |\nabla_{y,s} Y_{k,j}(\yb)|\, \leq \,C_n \, k^{(n-2)/2}\,.
\end{equation}
Let us further note that for $\xb \in B$, the integral in \eqref{iipart} is unchanged if
we insert the factor $\eta_B(\yb)$, since the latter equals 1 in $4B$.
Combining these observations, we find that 
\eqref{iipart} is bounded in absolute value by
\[ F(\xb):=\, C_m M^m \sum_{k\geq 1} k^{-m +1+(n-2)/2}\, \sum_{j=1}^{h_k}
\left|\mathcal{C}^{k,j}_\eps\,\eta_B(\xb)\right|\,,
\]
for $\xb \in B$, where $\mathcal{C}^{k,j}_\eps$ is a smoothly truncated
version of a parabolic Calder\'on-type commutator, defined as in Theorem \ref{commutator}, with respect to the 
normalized kernel $k^{-1-(n-2)/2} \widetilde{H}_{k,j}$, which as noted above is even in $x$.
By this normalization, \eqref{Yest}, and Theorem \ref{commutator}, we have
\[ \sup_{j,k} \sup_{\eps>0} \|\mathcal{C}^{k,j}_\eps\|_{L^2(\rn)\to L^2(\rn)} \leq \,CM\,.
\]
Consequently, by \eqref{hk}, we have
\[\fint_B|F(\xb)| d\xb\leq \left(\fint_B|F(\xb)|^2 d\xb\right)^{1/2} 
 \lesssim M^{m+1}\,, 
\]
provided that we fix $m> 1+3(n-2)/2$.  For the contribution of term $II(\xb,\yb)$, we therefore
obtain the desired bound in \eqref{eqT1}, with $N_0=m+1$.

Finally , we consider term $III(\xb,\yb)$.  Define
\[\gamma_A(\xb,\delta) := \left(\delta^{-d-2}\int_{\delta/4\leq \|\xb-\yb\|\leq \delta}
 \mid  A(\xb)- A(\yb)-(x-y)\cdot P_\delta \nabla _x A(\xb)\mid ^2 \, d\yb\right)^{1/2}\,.
 \]
After plugging $III(\xb,\yb)$ into the integral in \eqref{eqteother},
and then averaging over the (unit) ball $B$, we obtain the bound
\[|B|^{-1} \int_B \int_0^1 \gamma^2_A(\xb,\delta)\, \frac{d\delta}{\delta} d\xb
 \leq \,C M^2\,,
\]
by the parabolic Dorronsoro-type result proved in \cite[p.249, estimate (35)]{H2}.
\end{proof}

With Theorem \ref{tsio2} in hand, this concludes the proof of  
Theorem \ref{tsio}.
\end{proof}

\begin{proof}[Proof of Theorem \ref{commutator}]
As above, it is enough to verify the localized $T1$ estimate
\eqref{eqT1}, but now with $\mathcal{C}_\eps$ in place of $\te$.
Once again by dilation invariance (within the class of kernels under consideration), we may suppose that
$B$ has radius $r=1$.
By the localization lemma in \cite[Appendix, Lemma 2]{H2}, we may assume that $A$ is supported in $10B$, 
and that $\nabla_x A$ and $\dd_nA$ belong to $L^2(\rn)$, with
\begin{equation}\label{local}
\left(|B|^{-1}\int_{\rn}\left( |\nabla_x A(\xb)|^2 + |\dd_n A(\xb)|^2\right)\, d\xb\right)^{1/2} \,\lesssim\, \|A\|_{comm} =: M\,. 
\end{equation}

For notational convenience, with $\xb = (x,t)$, we shall denote the parabolic dilations as follows:
\[ \rho^{(1,2)} \xb:= (\rho x,\rho^2 t)\,,\quad \rho>0\,.
\]
We also denote points on the unit sphere as $\omega= (\omega',\omega_n)$, where
$\omega_n$ is the component of $\omega$ in the time direction.
Using parabolic polar coordinates (see, e.g., \cite{FR1}), 
and integrating by parts, we may write
\begin{multline*}
C_\eps \eta_B (\xb) \\[4pt]
=\, \int_{S^{n-1}} \int_\eps^\infty \frac{A(\xb) - A\left(\xb - \rho^{(1,2)} \omega\right)}{\rho}
H\left(\rho^{(1,2)} \omega \right)\, \rho^{d-1} \, d\rho \, (1+\omega_n^2)\,d\sigma(\omega)
\\[4pt]
=\, \int_{S^{n-1}} \int_\eps^\infty \omega'\cdot \nabla_x A\big(\xb - \rho^{(1,2) }\omega\big)\,
  \widetilde{H}(\rho^{(1,2)}\omega)
\, \rho^{d-1}\,d\rho \, (1+\omega_n^2)\,d\sigma(\omega)
\\[4pt]
+\,\, \int_{S^{n-1}} \int_\eps^\infty 2\rho\omega_n \, \partial_t A\big(\xb - \rho^{(1,2) }\omega\big) \,
  \widetilde{H}(\rho^{(1,2)}\omega)
\, \rho^{d-1}\,d\rho \, (1+\omega_n^2)\,d\sigma(\omega) \\[4pt]
+\,  \, \text{boundary terms}\, =: \,I + II +\text{boundary terms},
\end{multline*}
where the boundary term at $\rho =\eps$ is uniformly bounded by $CM$, and where
\[  \widetilde{H}(\rho^{(1,2)}\omega) := \rho^{1-d} \int_\rho^\infty r^d H\left(r^{(1,2)} \omega \right)\, \frac{dr}{r^2} \,
=\, \int_1^\infty r^d H\left((r\rho)^{(1,2)} \omega \right)\, \frac{dr}{r^2}\,,
\]
i.e., with $\rho^{(1,2)}\omega =\xb-\yb$,
\[  \widetilde{H}(\xb-\yb) = \int_1^\infty r^d H\left(r^{(1,2)} (\xb-\yb) \right)\, \frac{dr}{r^2}
\]
Note that $\widetilde{H}$ satisfies \eqref{eqczn} with $N=1$, and $\widetilde{H}$ is even in the space
variable (since these properties holds for $H$, by 
assumption).
Returning to rectangular coordinates, we see that
\[ I = \int_{\|\xb-\yb\|>\eps} \frac{x-y}{\|\xb-\yb\|} \cdot \nabla_y A(\yb)\, \widetilde{H}(\xb-\yb)\,d\yb
=: S_\eps (\nabla A)(\xb)\,,
\]
where $S_\eps$ is the truncated convolution singular integral operator with the vector-valued 
C-Z kernel
$H_0(\xb):= x \|\xb\|^{-1}  \widetilde{H}(\xb)$, which is odd in $x$.  Thus, 
\[\fint_B |I| \, d\xb \leq \left(\fint_B|S_\eps (\nabla A)(\xb)|^2\, d\xb\right)\,\leq\, CM\,,
\]
by Theorem \ref{convolution} and \eqref{local}.

Finally, writing term $II$ in rectangular coordinates yields
\[ II= 2\int_{\|\xb-\yb\|>\eps}  \frac{t-s}{\|\xb-\yb\|}\,\partial_s A(\yb)\, \widetilde{H}(\xb-\yb)\,d\yb =:
J_\eps * \partial_s A (\xb)\,,
\]
where
\[J_\eps (\xb):=  \frac{t-s}{\|\xb-\yb\|}\, \widetilde{H}(\xb-\yb)\, 1_{\|\xb\|>\eps}\,.
\]
A routine computation shows that
\[|\widehat{J_\eps}(\xi,\tau)| \lesssim \|(\xi,\tau)\|^{-1}\,,
\]
uniformly in $\eps$, and hence that 
\[\|J_\eps * \partial_s A \|_{L^2(\rn)} \,\lesssim\, \|\dd_n A\|_{L^2(\rn)}\, \lesssim \,M\,,
\]
by  \eqref{local} and the fact that $B$ has radius 1.  Consequently,
\[\fint_B |II| \,d\xb \,\lesssim \,M\,.
\]
\end{proof}

\end{document}